\newtheorem*{theorem}{Theorem}
\newtheorem{lemma}{Lemma}
\theoremstyle{remark}
\newtheorem*{remark}{Remark}
\newcommand{\set}[2]{\ensuremath{\{ #1 \>|\> #2 \}}}
\def\liebrack{\ensuremath{[\,\cdot\, , \cdot\,]}}      
\def\curlybrack{\ensuremath{\{\,\cdot \,, \cdot\,\}}}  
\begin{document}

\title[A Lie algebra that can be written as a sum of two nilpotent subalgebras]
{A Lie algebra that can be written as a sum of two nilpotent subalgebras, is solvable}
\author{P.A. Zusmanovich}
\address{}
\email{justpasha@gmail.com}
\date{last minor revision July 8, 2015}
\thanks{Mat. Zametki \textbf{50} (1991), $\mathcal{N}$3, 38--43 (in Russian); 
Math. Notes \textbf{50} (1991), 909--912 (English translation); 
\textsf{arXiv:0911.5418}.}
\thanks{}

\maketitle

In 1963 O. Kegel raised the following question: is a Lie ring written as a sum of 
two nilpotent subrings solvable? Recently, Kostrikin \cite{1} brought a renewed 
attention to this question in the case of finite-dimensional algebras over a field.
The question is easily solved in the affirmative in the case of characteristic zero
(cf. \cite{1} or \cite{2}). The purpose of this note is to prove the following 
theorem. 

\begin{theorem}
Over a field of characteristic $p>5$, a finite-dimensional Lie algebra written as a
sum of two nilpotent subalgebras, is solvable.
\end{theorem}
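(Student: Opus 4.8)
The plan is to argue by contradiction through a minimal counterexample, reduce to the case of a simple Lie algebra, and there appeal to the classification theory of modular Lie algebras, which is what the hypothesis $p>5$ is really buying us. Suppose the theorem is false and let $L=A+B$, with $A,B$ nilpotent, be a counterexample of least dimension that is not solvable. Writing $R$ for the solvable radical, the images of $A$ and $B$ in $L/R$ are again nilpotent and sum to $L/R$; if $R\neq 0$, minimality would force $L/R$ solvable and hence $L$ solvable. So $R=0$ and $L$ is semisimple. Since nilpotency of the summands, the decomposition $L=A+B$, and failure of solvability are all stable under extension of scalars, I may assume the ground field algebraically closed. By the structure theorem for semisimple modular Lie algebras, $L$ sits between a direct sum $\bigoplus_i S_i\otimes O(m_i)$ of simple algebras tensored with divided-power algebras and the derivation algebra of that sum; the first reduction is therefore to push the decomposition $A+B$ down to a single simple constituent, so that it suffices to treat $L$ simple.

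For $L$ simple over an algebraically closed field of characteristic $p>5$, the classification theory (valid for $p>5$) yields the dichotomy that $L$ is either of classical type or of Cartan type $W$, $S$, $H$, or $K$. This is exactly where the hypothesis enters: $p>5$ excludes the small-characteristic exceptional simple algebras — in particular the Melikian algebras, which live only at $p=5$ — and makes the clean classical-versus-Cartan split available.

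In the classical case I would use the fact, available here because $L$ arises from an algebraic group, that every nilpotent subalgebra lies in a Borel subalgebra $\mathcal B=H\oplus\mathcal N$, and that the maximal nilpotent subalgebra of $\mathcal B$ is its nilradical $\mathcal N$, of dimension equal to the number of positive roots $(\dim L-\mathrm{rank}\,L)/2$. Hence $\dim A+\dim B\le \dim L-\mathrm{rank}\,L<\dim L$, so $A$ and $B$ cannot exhaust $L$ — a contradiction. The underlying local fact is that a nilpotent subalgebra $N$ lies in its own Fitting null-component $L_0(\mathrm{ad}\,N)$ by Engel's theorem, which forbids it from containing a nonzero toral element together with root vectors moved by it.

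The Cartan-type case is the genuine obstacle, because the crude dimension count fails there: already for the Witt algebra $W(1;1)$ of dimension $p$ one has a nilpotent subalgebra of dimension $p-2$, and $2(p-2)>p$. Here I would instead exploit the natural grading $L=\bigoplus_{i\ge -1}L_i$ together with its Weisfeiler filtration, whose nonnegative part $L_{\ge 0}$ is a distinguished maximal subalgebra. Using conjugacy of maximal tori and the fact that the degree-$0$ component acts semisimply, I would argue that each nilpotent subalgebra is confined to a filtered piece transverse to the bottom layer $L_{-1}$, so that $A+B$ must miss part of $L_{-1}$. Making this confinement precise in every Cartan type, and ensuring the bound survives the passage from the semisimple structure theorem back to a single simple summand with its tensor factor $O(m_i)$, is where I expect the real work to lie.
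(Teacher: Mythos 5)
Your opening reduction (minimal counterexample, radical zero, passage to an algebraically closed field) matches the paper, but after that there are two genuine gaps, each fatal on its own. First, the step ``semisimple $\Rightarrow$ it suffices to treat $L$ simple'' is asserted, not proved, and it is not available. In characteristic $p$ a semisimple Lie algebra is not a direct sum of simple ideals; by Block's theorem it only sits between $\bigoplus_i S_i \otimes O(m_i)$ and the derivation algebra of that socle, and there is no projection of $L$ onto a single constituent $S_i$: the constituents are neither quotients of $L$ nor complemented ideals, so the decomposition $L = A + B$ induces nothing on $S_i$, nor even on $S_i \otimes O(m_i)$, and minimality of the counterexample cannot be invoked for them either. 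The paper is built precisely to avoid this reduction: minimality gives that every proper subalgebra containing $A$ or $B$ is solvable, hence $L$ has a solvable \emph{maximal} subalgebra, and Weisfeiler's theorem --- extended from simple to semisimple exactly by means of Block's theorem --- then produces a filtration of $L$ itself whose associated graded algebra is $S \otimes O_m + D$ with $S = sl_2(K)$ or the Zassenhaus algebra $W_1(n)$. That theorem, not the classification of simple modular Lie algebras (which, incidentally, was not available in 1991), is what the hypothesis $p>5$ is actually paying for.

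Second, the Cartan-type case, which you correctly identify as the heart of the matter, is exactly where your sketch stops, and the two facts it leans on are false in characteristic $p$: maximal tori of Cartan-type algebras are \emph{not} all conjugate (Demushkin classified the conjugacy classes; already $W_1(n)$ has non-conjugate maximal tori), and the degree-zero component does not act semisimply. So no argument is actually given that $A + B$ must miss part of $L_{-1}$; indeed your own example (a nilpotent subalgebra of dimension $p-2$ in $W_1(1)$) shows the obstruction is real. What replaces this hand-waving in the paper is concrete machinery: a cohomological deformation argument showing that the bracket of $L$ differs from that of $gr\,L$ only by terms raising degree by at least $p$; an analysis of $gr\,B$ giving $(gr\,B)_{-1} = e_{-1}\otimes O_m$, $pr_D\, gr\,B = D$, and then $gr\,B \subseteq \langle e_{-1}, e_0\rangle \otimes O_m + D$ with $D$ consisting of nilpotent derivations of $O_m$; and finally the bound $\dim D < p^m$ for such $D$, which contradicts the inequality $\dim L \le 2\dim B$. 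Your classical-case count (every nilpotent subalgebra has dimension at most the number of positive roots) also needs a characteristic-$p$ proof --- note that a Cartan subalgebra is nilpotent but does not lie in the nilradical of a Borel, so ``the maximal nilpotent subalgebra of $\mathcal B$ is $\mathcal N$'' is not literally true --- but that is secondary compared with the two gaps above.
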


In \cite{1}, \cite{3}, \cite{4}, \cite{5}, a similar statement is proved under 
additional restrictions on the nilpotency index of one summand (with fewer 
restrictions on the characteristic of the ground field). Note that the theorem is 
no longer true when $p = 2$ (an appropriate counter example has been constructed in
\cite{4}). We make an essential use of Weisfeiler's results \cite{6} on Lie 
algebras with a solvable maximal subalgebra which dictates a restriction upon the 
characteristic of the ground field. 

We turn to the proof of the theorem. We may assume the ground field $K$ to be 
algebraically closed. Let $L$ be a counter example to the theorem having the least 
possible dimension, let $L = A + B$ be the corresponding decomposition into a sum 
of nilpotent subalgebras, where $\dim A \le \dim B$. The following lemma can be 
easily deduced from the minimality of the counter example (cf. \cite{1}). 

\begin{lemma}\label{lemma-1}\hfill
\begin{enumerate}
\item $L$ is semisimple.
\item If $L_0$ is a proper subalgebra of $L$ containing $A$ (or $B$), then $L_0$ is 
solvable. In particular, $L$ possesses a solvable maximal subalgebra.
\end{enumerate}
\end{lemma}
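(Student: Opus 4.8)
The plan is to extract both statements directly from the two ways minimality can be invoked: passing to a \emph{quotient} of $L$ (for semisimplicity) and passing to a \emph{subalgebra} of $L$ (for the solvability of $L_0$). In each case the key point is that the relevant smaller algebra again inherits a decomposition as a sum of two nilpotent subalgebras, so the minimality of $L$ forces it to be solvable.

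For part (i), I would argue that $L$ has no nonzero solvable ideal. Suppose $I \neq 0$ is a solvable ideal of $L$. The images $\bar A = (A+I)/I$ and $\bar B = (B+I)/I$ are nilpotent, being homomorphic images of the nilpotent algebras $A$ and $B$, and they satisfy $L/I = \bar A + \bar B$. Since $\dim L/I < \dim L$, minimality gives that $L/I$ is solvable; as $I$ is itself solvable, $L$ is then solvable, contradicting the choice of $L$ as a counterexample. Hence the solvable radical of $L$ vanishes, i.e.\ $L$ is semisimple.

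For part (ii), suppose $A \subseteq L_0 \subsetneq L$ (the case $B \subseteq L_0$ is symmetric). Because the lattice of subspaces is modular and $A \subseteq L_0$, the modular law yields
\[
L_0 = L_0 \cap L = L_0 \cap (A + B) = A + (L_0 \cap B).
\]
Here $A$ is nilpotent by hypothesis, and $L_0 \cap B$, being a subalgebra of $B$, is nilpotent as well, so $L_0$ is again a sum of two nilpotent subalgebras. Since $\dim L_0 < \dim L$, minimality forces $L_0$ to be solvable. Finally, because $L$ is not solvable while $A$ is nilpotent (hence solvable), $A$ is a proper subalgebra; choosing any maximal subalgebra $M$ with $A \subseteq M \subsetneq L$ and applying what we have just proved shows that $M$ is solvable. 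Thus $L$ possesses a solvable maximal subalgebra.

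The arguments are short, and the only step demanding a moment's care is the decomposition in (ii): one must verify that the purely lattice-theoretic modular identity applies (it does, since $A \subseteq L_0$ as subspaces) and that $L_0 \cap B$ is genuinely a nilpotent \emph{subalgebra}, not merely a subspace. I do not anticipate a serious obstacle here; the real content of the theorem lies entirely beyond this lemma, whose sole function is to normalize the minimal counterexample so that Weisfeiler's structure theory can later be brought to bear.
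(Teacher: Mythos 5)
Your proof is correct and is exactly the argument the paper has in mind: the paper omits the proof, remarking only that the lemma ``can be easily deduced from the minimality of the counter example (cf.\ [1]),'' and your two steps---passing to the quotient $L/I$ for semisimplicity, and using the modular law $L_0 = A + (L_0 \cap B)$ to exhibit a proper subalgebra as a sum of two nilpotent subalgebras---constitute precisely that standard deduction. Nothing is missing; the existence of a maximal subalgebra containing $A$ follows from finite-dimensionality, as you implicitly use.
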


The following result has been proved in \cite{6}: a maximal solvable subalgebra in 
a simple Lie algebra determines a long filtration in it. A closer analysis of this 
proof enables us to replace the simplicity condition by semisimplicity. Indeed, a 
semisimple Lie algebra with a solvable maximal subalgebra possesses a unique ideal 
by Block's theorem \cite{7}. The proof of Theorem 1.2.2 in \cite{6} may be repeated
for such algebras almost verbatim. But the proof of Theorem 1.5.1 relies only on 
the conclusion of Theorem 1.2.2 and never makes use of the simplicity of the 
algebra. 

Let $L_0$ be a maximal solvable subalgebra of $L$ containing $A$ (it exists due to 
Lemma 1). By the above, we can apply Theorem 2.1.3 in \cite{6} which states that 
for the filtration $L \supset \dots \supset L_{-1} \supset L_0 \supset L_1 \supset \dots$,
determined by the subalgebra $L_0$, the associated graded algebra will have the form
$$
gr L = G = S \otimes O_m + D ,
$$
where $S$ is a simple Lie algebra isomorphic to $sl_2(K)$ or to the Zassenhaus 
algebra $W_1(n)$, $O_m$ is the algebra of truncated polynomials in $m$ variables, $D$
is a solvable subalgebra of $W_m = Der(O_m)$ such that $O_m$ contains no 
$D$-invariant ideals. The grading is given as follows:
\begin{align}\label{1}
&G_i = \langle e_i \rangle \otimes O_m , \quad i\ne 0 \\
&G_0 = \langle e_0 \rangle + D ,  \notag
\end{align}
where $S = \bigoplus_{i\ge -1} \langle e_i \rangle$ is the standard grading. 
Henceforth, this particular grading is meant each time we refer to 
$S \otimes O_m + D$ as a graded algebra. 

If $D = 0$, then $L$ is simple and, by Corollary 2.1.4 in \cite{6}, $L = S$. The 
impossibility of representing it as a sum of two nilpotent subalgebras can be 
easily verified in this case, for instance, by means of an argument which we will 
apply below in the general case. So we assume henceforth that $D \ne 0$. 
Furthermore, in the case of $S = sl_2(K)$ the remaining argument is either the same
as that for $S = W_1(n)$, or much simpler. Thus, we put henceforth $S = W_1(n)$. 

It has been shown in \cite{8} that each $\{G_i\}$-deformation of the algebra $G$ 
determined by the grading (\ref{1}) contains an ideal which is a deformation of the
algebra $W_1(n) \otimes O_m$. We will provide a cohomological proof of a somewhat 
more general result using definitions, notation, and facts from \cite{9}. 

We can write
$$
\{x,y\} = [x,y] + \sum_{s\ge 1} \psi_s(x,y) = [x,y] + \psi(x,y) ,
$$
where $\liebrack$ and $\curlybrack$ are multiplications in the algebras $G$ and 
$L$, respectively, and
$$
\psi_s \in C_s^2(G,G) = \set{\psi\in C^2(G,G)}{\psi(G_i,G_j) \subseteq G_{i+j+s}} .
$$

The Jacobi identity implies that
\begin{equation}\label{2}
d\psi_s + \sum_{i+j = s} \psi_i * \psi_j = 0 ,
\end{equation}
where $d$ is the coboundary operator, and $*$ is defined as follows:
$$
\varphi * \psi (x,y,z) = 
\varphi(\psi((x,y),z) + \varphi(\psi(z,x),y) + \varphi(\psi(y,z),x) .
$$

We want to show that, up to coboundaries, $\psi_s = 0$ for $1\le s < p$. Suppose 
that, by means of a coboundary change, we have already achieved the equalities 
$\psi_s = 0$, $1\le s < k < p$. By virtue of (\ref{2}), $\psi_k \in Z_k^2(G,G)$. 
Defining the action of $G$ on $C^2(G,G)$ in the standard way, it is easy to see that
$Z_k^2(G,G)$ is invariant relative to the action of $G_0$. 

Consider the torus $e_0 \otimes \langle 1 \rangle$ in $G$. The root subspaces 
relative to the action of this torus in $G$ are
\begin{align*}
&\widehat{G}_i = 
\langle e_i, e_{i+p}, e_{i+2p}, \dots, e_{i+p^n-p} \rangle \otimes O_m, 
\quad i\in \mathbb Z_p, i\ne 0, \\
&\widehat{G}_0 = 
\langle e_0, e_p, e_{2p}, \dots, e_{p^n-p} \rangle \otimes O_m + D .
\end{align*}
By the above, each cocycle in $Z_k^2(G,G)$ is cohomologically equivalent to some 
cocycle in $Z_k^2(G,G)$ invariant relative to the action of this torus, and we may 
put
$$
\psi_k(\widehat{G}_i, \widehat{G}_j) \subseteq \widehat{G}_{i+j} .
$$
But since $\psi_k\in Z_k^2(G,G)$ and $k < p$, we have $\psi_k = 0$.

Thus, 
\begin{equation}\label{3}
\psi_k(G_i,G_j) \subseteq \bigoplus_{s \ge p} G_{i+j+s} .
\end{equation}

We will view the algebra $L$ as the vector space $W_1(n) \otimes O_m + D$ with 
multiplication $\curlybrack$. On the subalgebra $B$ an induced filtration can be 
constructed: $B_i = B \cap L_i$. We define a homomorphism
$$
\varphi: gr B \to gr L, \quad x + B_{i+1} \mapsto x + L_{i+1}, \quad x\in B_i .
$$
It is easily seen that $Ker\,\varphi = 0$ and, therefore, we may identify $gr B$ 
with a subalgebra of $gr L$.

\begin{lemma}\label{lemma-2}\hfill
\begin{enumerate}
\item $gr B$ is a homogeneous nilpotent subalgebra of $gr L$.
\item $(gr B)_{-1} = e_{-1} \otimes O_m$.
\item 
$pr_D gr B = D$, where the left-hand side of the equality denotes the 
projection of $gr B$ onto $D$ (in the algebra $gr L$).
\end{enumerate}
\end{lemma}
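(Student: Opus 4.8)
The plan is to prove the three assertions essentially separately, relying only on the shape of the grading \eqref{1}, on the fact that $S\otimes O_m$ is an ideal of $G$ with quotient $G/(S\otimes O_m)\cong D$ (so that $pr_D$ is a homomorphism annihilating every $G_i$ with $i\ne 0$), and on Lemma~\ref{lemma-1}. Two formal facts about the associated graded of a finite filtration will be used throughout. First, if a subalgebra is nilpotent (respectively solvable), then so is its associated graded: an $n$-fold bracket of homogeneous elements of $gr B$ is the class of the $n$-fold bracket of any representatives, so vanishing of $n$-fold brackets (respectively of the $k$-th derived algebra) in $B$ forces the same in $gr B$. Second, there is an inclusion-preserving correspondence $M\mapsto N$ assigning to a graded subalgebra $M=\bigoplus_i M_i\subseteq gr L$ the filtered subalgebra $N\subseteq L$ with $N_i=\{x\in L_i : x+L_{i+1}\in M_i\}$; it satisfies $gr N=M$ and $\dim N=\dim M$, and $gr B\subseteq M$ implies $B\subseteq N$ (by downward induction on the filtration degree).

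For part (i): $gr B$ is homogeneous by construction, being the image under the grading-preserving embedding $\varphi$ of the associated graded of the induced filtration on $B$. Since $B$ is nilpotent, the first formal fact above gives that $gr B$ is nilpotent. For part (ii): the grading \eqref{1} has depth one, i.e.\ $G_i=0$ for $i\le -2$ because $S=W_1(n)$ has lowest standard degree $-1$; hence $L=L_{-1}$ and $B_{-1}=B\cap L_{-1}=B$. Because $A\subseteq L_0$ we get $L=A+B\subseteq L_0+B$, so $L=L_0+B$, and therefore the inclusion-induced map $B/B_0\to L/L_0$ is surjective. This map is exactly the degree $-1$ component of $\varphi$, which is injective, so $(gr B)_{-1}=B/B_0\xrightarrow{\sim}L/L_0=G_{-1}=e_{-1}\otimes O_m$.

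For part (iii): set $D'=pr_D\,gr B$, a subalgebra of $D$, and note that $gr B$ is contained in the graded subalgebra $\tilde M=S\otimes O_m+D'$ (an ideal plus a subalgebra of $D$); indeed every component $(gr B)_i$ with $i\ne 0$ lies in $S\otimes O_m$, while $(gr B)_0\subseteq e_0\otimes O_m+D'$ precisely because $pr_D(gr B)_0=D'$. Let $N\subseteq L$ be the filtered subalgebra with $gr N=\tilde M$ furnished by the correspondence; then $B\subseteq N$ and $\dim N=\dim\tilde M$. If $D'\ne D$, then $\tilde M\subsetneq G$, so $N$ is a \emph{proper} subalgebra of $L$ containing $B$, whence $N$ is solvable by Lemma~\ref{lemma-1}(ii), and then so is $gr N=\tilde M$ by the first formal fact. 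But $\tilde M$ contains $S\otimes 1\cong W_1(n)$, a simple, hence non-solvable, subalgebra — a contradiction. Therefore $D'=D$.

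I expect the only real obstacle to be part (iii), and within it the bookkeeping of the graded/filtered correspondence: one must check that $\tilde M$ is genuinely a graded subalgebra, that $gr B\subseteq\tilde M$ (which, as noted, reduces to the definition of $D'$ on the degree-zero component), and that passing to $N$ simultaneously keeps $B\subseteq N$ and strictly lowers the dimension when $D'\subsetneq D$. Once these are in place the contradiction with Lemma~\ref{lemma-1}(ii) is immediate. Parts (i) and (ii) are then routine consequences of the filtration having depth one and of $A$ lying inside $L_0$.
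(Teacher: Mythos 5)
Parts (i) and (ii) of your proof are correct and coincide with the paper's own argument, and your overall strategy for (iii) --- exhibit a subalgebra $M$ of $L$ containing $B$ with $gr\,M = W_1(n)\otimes O_m + D'$, then contradict Lemma~\ref{lemma-1}(ii) because the associated graded of a solvable algebra is solvable while $gr\,M \supseteq W_1(n)\otimes 1$ --- is also the paper's. The genuine gap is your ``second formal fact'': no such graded-to-filtered correspondence exists. As defined, your $N_i=\set{x\in L_i}{x+L_{i+1}\in M_i}$ contains all of $L_{i+1}$ (the zero coset lies in $M_i$), and in the one case you apply it to, $\tilde M_{-1}=G_{-1}$ is the whole of $L_{-1}/L_0$, so $N_{-1}=L$ and hence $N=L$: the claims $gr\,N=M$ and $\dim N=\dim M$ both fail, and $N$ is not proper, which is exactly what the contradiction required. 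Moreover, no corrected recipe can be purely formal, because lifting a graded subalgebra of $gr\,L$ to a subalgebra of $L$ is genuinely obstructed by the discrepancy between \curlybrack\ and \liebrack. For instance, let $L$ be the Heisenberg algebra $\langle x,y,z\rangle$, $[x,y]=z$, filtered by $L_0=\langle y,z\rangle \supset L_1=\langle z\rangle$; then $gr\,L$ is abelian, but the graded subalgebra $\langle \bar x,\bar y\rangle$ lifts to no two-dimensional subalgebra of $L$, since any candidate would contain $[x+\alpha y+\beta z,\, y+\gamma z]=z$.

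The missing ingredient is precisely relation (\ref{3}), which your proof never invokes --- a telltale sign, since establishing (\ref{3}) is what the entire cohomological discussion preceding Lemma~\ref{lemma-2} is for. Under the identification of $L$ with the vector space $W_1(n)\otimes O_m+D$ one has $\{x,y\}=[x,y]+\psi(x,y)$, and (\ref{3}) says $\psi(G_i,G_j)\subseteq\bigoplus_{s\ge p}G_{i+j+s}$. Since the grading has depth one ($i,j\ge -1$) and $p>5$, every value of $\psi$ lies in components of strictly positive degree, hence in $W_1(n)\otimes O_m\subseteq \tilde M$. Therefore $\tilde M=W_1(n)\otimes O_m+D'$ --- closed under \liebrack\ because $W_1(n)\otimes O_m$ is an ideal of $G$ and $D'$ is a subalgebra of $D$ --- is also closed under the actual multiplication \curlybrack\ of $L$; being a graded subspace, it is itself the subalgebra $M$ of $L$ with $gr\,M=\tilde M$ and $B\subseteq M$ that the paper's proof uses. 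In short, your reduction of (iii) to Lemma~\ref{lemma-1}(ii) is right, but the subalgebra fed into it must be manufactured inside $L$ itself via (\ref{3}); it cannot be lifted from $gr\,L$ by a general principle, because no such principle is true.
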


\begin{proof}
(i) is obvious.

(ii) Since $L = A + B$ and $A \subseteq L_0$, we have
$$
(gr B)_{-1} = (gr L)_{-1} = e_{-1} \otimes O_m .
$$

(iii) Clearly, $pr_D gr B$ is a subalgebra of $D$. Therefore, 
$W_1(n) \otimes O_m + pr_D gr B$ is a subalgebra of $W_1(n) \otimes O_m + D$ 
containing $gr B$ and closed under the multiplication $\curlybrack$ (this 
follows from (\ref{3})). So there exists a subalgebra $M$ of $L$ such that 
$B \subseteq M \subseteq L$ and $gr M = W_1(n) \otimes O_m + pr_D gr B$. If 
$pr_D gr B \ne D$, then $M \ne L$ and, by Lemma \ref{lemma-1}, $M$ is solvable, 
whence $gr M$ is solvable, which is impossible.
\end{proof}

\begin{lemma}\label{lemma-3}
Suppose that $N$ is a subalgebra of $W_1(n) \otimes O_m + D$ (relative to the usual
multiplication $\liebrack$), and $O_m$ contains no proper $D$-invariant ideals.
If $N$ satisfies the conclusions of Lemma \ref{lemma-2}, then
\begin{enumerate}
\item $N_0 \simeq D$.
\item $N \subseteq \langle e_{-1}, e_0 \rangle \otimes O_m + D$.
\item $D$ consists of nilpotent (viewed as derivations of $O_m$) elements.
\end{enumerate}
\end{lemma}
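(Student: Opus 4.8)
The plan is to study the adjoint action of the degree-zero component $N_0$ on the bottom component $N_{-1}=e_{-1}\otimes O_m$, which I identify with $O_m$ through $e_{-1}\otimes f\leftrightarrow f$. Writing an element of $N_0\subseteq G_0$ as $e_0\otimes g+d$ with $g\in O_m$ and $d\in D$, the relations $[e_0,e_{-1}]=-e_{-1}$ and $[d,e_{-1}\otimes f]=e_{-1}\otimes d(f)$ give
\[
[e_0\otimes g+d,\,e_{-1}\otimes f]=e_{-1}\otimes\bigl(d(f)-gf\bigr),
\]
so under the identification $e_0\otimes g+d$ acts on $O_m$ as $d-M_g$, where $M_g$ denotes multiplication by $g$. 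Since $N$ is nilpotent, each $\mathrm{ad}(n_0)$ is a nilpotent operator, hence so is its restriction to the invariant subspace $N_{-1}$; thus $N_0$ acts on $O_m$ through a Lie algebra of nilpotent operators. By Engel's theorem there is a common null vector: a nonzero $v\in O_m$ with $d(v)=gv$ for every $e_0\otimes g+d\in N_0$.

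The decisive point is to upgrade $v$ to a unit of $O_m$. Let $U=\{g\in O_m : e_0\otimes g\in N_0\}$ and consider the principal ideal $O_m v$. For each $d\in D$ choose, using the hypothesis $pr_D N=D$, an element $e_0\otimes g_d+d\in N_0$; then
\[
d(fv)=d(f)\,v+f\,d(v)=\bigl(d(f)+g_d f\bigr)v\in O_m v,
\]
so $O_m v$ is a nonzero $D$-invariant ideal. As $O_m$ has no proper $D$-invariant ideals, $O_m v=O_m$ and $v$ is invertible. Putting $d=0$ in the eigenvector relation then forces $gv=0$, hence $g=0$, for every $e_0\otimes g\in N_0$; that is, $U=0$. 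Since $pr_D\colon N_0\to D$ is a surjective Lie algebra homomorphism with kernel $e_0\otimes U$, this is exactly conclusion (i).

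Conclusion (iii) now follows by a conjugation. Fix $d\in D$ with $e_0\otimes g_d+d\in N_0$; the eigenvector relation reads $g_d=d(v)v^{-1}$, and a direct computation gives $M_v^{-1}\,d\,M_v=d+M_{g_d}$, whence
\[
M_v^{-1}(d-M_{g_d})M_v=d.
\]
Because $d-M_{g_d}$ is the nilpotent operator by which $N_0$ acts on $O_m$, its conjugate $d$ is a nilpotent endomorphism of $O_m$, which is (iii). For (ii) I use that in the Zassenhaus algebra $[e_{-1},e_j]=e_{j-1}$ for all $j\ge0$, so $\mathrm{ad}(e_{-1}\otimes1)$ carries $e_j\otimes h$ to $e_{j-1}\otimes h$. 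If $N_{i_0}\ne0$ for some $i_0\ge1$, pick $0\ne e_{i_0}\otimes h\in N_{i_0}$ and apply $\mathrm{ad}(e_{-1}\otimes1)$ exactly $i_0$ times; since $N$ is a homogeneous subalgebra the outcome $e_0\otimes h$ lies in $N_0$, so $h\in U=0$ by (i), contradicting $h\ne0$. Hence $N_i=0$ for all $i\ge1$, giving (ii).

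I expect the main obstacle to be the upgrade of the common eigenvector $v$ to an invertible element, since this is the sole place where the hypothesis that $O_m$ carries no proper $D$-invariant ideal enters, and it requires matching each derivation $d\in D$ with its own multiplier $g_d$ so that the principal ideal $O_m v$ is genuinely $D$-stable. Once $v$ is known to be a unit, the remaining steps---the two invocations of Engel's theorem, the conjugation identity, and the degree-lowering by $\mathrm{ad}(e_{-1}\otimes1)$---are routine.
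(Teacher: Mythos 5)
Your proof is correct, and for parts (i) and (iii) it genuinely diverges from the paper's argument, while your part (ii) coincides with the paper's (repeated commutation with $e_{-1}\otimes 1$). The paper proceeds as follows: it sets $F=\set{f\in O_m}{e_0\otimes f\in N}$, shows $D(F)\subseteq F$ by bracketing $e_0\otimes f$ against elements $d+e_0\otimes g\in N$ (this is where it uses $pr_D N=D$), so that $FO_m$ is a $D$-invariant ideal; if $F\ne 0$ this forces $F$ to contain a unit $f$, and then the explicit computation $ad(e_0\otimes f)^p(e_{-1}\otimes 1)=e_{-1}\otimes f^p$, a nonzero multiple of $e_{-1}\otimes 1$, contradicts the nilpotency of $N$; hence $F=0$, which yields (i). For (iii) the paper notes that $d+R_g$ (your $d-M_{g_d}$, up to the sign convention for $[e_0,e_{-1}]$) acts nilpotently on $O_m$ and extracts $d^{p^k}=0$ from $(d+R_g)^{p^k}=0$ via Jacobson's formula in the restricted algebra $W_m+R_{O_m}$. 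You instead apply Engel's theorem to the image of $N_0$ in $End(O_m)$ to produce a common null vector $v$, prove that the principal ideal $O_m v$ is $D$-invariant (using $pr_D N=D$ at exactly the point where the paper does), conclude that $v$ is a unit, and then harvest both (i) from $gv=0$ and (iii) from the conjugation $M_v^{-1}(d-M_{g_d})M_v=d$. What your route buys: the single element $v$ simultaneously replaces the paper's $ad^p$ computation and its appeal to Jacobson's formula, and the conjugation identity is an elegant elementary substitute for the restricted-Lie-algebra machinery. What the paper's route buys: it works bracket by bracket without Engel's theorem, and its proof of (i) is independent of any eigenvector considerations. Both arguments consume the same three hypotheses---nilpotency of $N$, surjectivity of $pr_D$ onto $D$, and absence of proper $D$-invariant ideals---so yours is a legitimate alternative rather than a paraphrase.
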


\begin{proof}
Put $F = \set{f\in O_m}{e_0 \otimes f \in N}$. We choose an arbitrary element 
$d \in D$ and find $g\in O_m$ such that $d + e_0\otimes g \in N$. Then for each 
$f\in F$
$$
e_0 \otimes d(f) = [e_0 \otimes f, d + e_0 \otimes g] \in N .
$$
Therefore, $D(F) \subseteq F$. But then $FO_m$ is a $D$-invariant ideal of $O_m$, 
whence $FO_m = 0$ or $O_m$. Thus, either $F = 0$, or $F$ contains a polynomial $f$ 
with a nonzero constant term. In the latter case the equality 
$$
ad(e_0 \otimes f)^p (e_{-1} \otimes 1) = e_{-1} \otimes f^p = e_{-1} \otimes 1
$$
leads to a contradiction with the nilpotency of $N$. Thus $F = 0$.
The map $e_0 \otimes f + d \mapsto d$ is the isomorphism required in (i). 

Let $e_i \otimes g \in N$, $i>0$. Commuting this element as many times as necessary
with $e_{-1} \otimes 1$, we obtain the element $e_0 \otimes g \in N$, whence 
$g = 0$, which proves (ii).

Choose again an arbitrary element $d\in D$, find $g\in O_m$ such that 
$d + e_0 \otimes \in N$, and note that the action of $ad\,(d + e_0 \otimes g)$ on
$e_{-1} \otimes o_m$ is determined by the action of the operator $d + R_g$ on $O_m$,
where $R_g$ is the multiplication by the element $g$ in $O_m$. The nilpotency of $N$
implies that $(d + R_g)^{p^k} = 0$ for some $k$. Jacobson's formula and the last 
equality imply that $d^{p^k} = 0$, which proves (iii). 
\end{proof}

By Lemmas 2 and 3, $\dim B = \dim gr B \le p^m + \dim D$. According to our initial
assumption, $\dim L \le 2 \dim B$, whence $p^{n+m} + dim D < 2p^m + 2\dim D$ and 
$\dim D \ge p^m$. The proof of the theorem is concluded by 

\begin{lemma}\label{lemma-4}
Suppose $D$ is a subalgebra of $W_m$ consisting of nilpotent elements, and $O_m$ 
contains no $D$-invariant ideals. Then $\dim D < p^m$.
\end{lemma}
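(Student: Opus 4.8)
The plan is to exploit the faithful linear action of $D$ on the $p^m$-dimensional space $O_m$ and to compare $\dim D$ with $\dim O_m = p^m$ directly; the bridge between the Lie structure and the algebra structure is the Leibniz rule, which I will use in the operator form $[d, R_f] = R_{d(f)}$, where $R_f$ denotes multiplication by $f \in O_m$.

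First I would record a transitivity (module-simplicity) statement. Let $J$ be the maximal ideal of $O_m$, and for $0 \ne \theta \in O_m$ let $W = W(\theta)$ be the smallest $D$-invariant subspace containing $\theta$, i.e.\ the span of $\theta$ under the associative algebra $\mathcal A$ generated by $D$. By the Leibniz rule the ideal $O_m \cdot W$ is again $D$-invariant, since $d(fw) = d(f)w + f\,d(w) \in O_m \cdot W$ for $d \in D$. As $O_m$ has no proper $D$-invariant ideals and $O_m \cdot W \ne 0$, Nakayama's lemma forces $W \not\subseteq J$. Thus $O_m$ is simple as a module over the differential operators generated by $O_m$ and $D$; equivalently, every nonzero vector is carried onto a unit by some element of $\mathcal A$.

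The strict inequality then follows from a rigidity property of nilpotent operators, once a regular vector is available. Suppose $\theta$ has trivial stabilizer, $\{ d \in D \>|\> d(\theta) = 0 \} = 0$. Then $d \mapsto d(\theta)$ embeds $D$ into $O_m$, so $\dim D \le p^m$; and if equality held, the map would be onto, yielding $d_0 \in D$ with $d_0(\theta) = \theta$, which is impossible for a nilpotent operator (it would force $\theta = d_0^k(\theta) = 0$). Hence $\dim D < p^m$.

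The crux is therefore to produce a vector with trivial stabilizer --- equivalently, to establish the non-strict bound $\dim D \le p^m$ --- and this is where the hypotheses must be used in full; I expect it to be the main obstacle. Faithfulness of the module alone does not suffice (the natural action of $\mathfrak{gl}(V)$ on $V$ is faithful, yet every stabilizer is large), so the count must genuinely combine Engel's theorem --- which supplies a common triangular flag for $D$ on $O_m$ and makes $D$ nilpotent as a Lie algebra --- with the simplicity above, whose role is to forbid the large positive-degree part that the filtration of $W_m$ by powers of $J$ would otherwise permit. Two tempting shortcuts are blocked in this truncated characteristic-$p$ setting and must be avoided: one cannot pass to a unipotent group $\langle \exp d \rangle$, since on $O_m$ the map $\exp(d)$ need not be a ring automorphism (already $\exp(\partial_1)$ violates $x_1^p = 0$), so the no-invariant-ideal hypothesis cannot simply be transported to a group action; and one cannot reduce $m$ by factoring out an invariant ideal, precisely because none exist. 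I would instead track the induced action of $D$ on the successive layers $J^i / J^{i+1}$ and use simplicity to show that the leading terms of $D$ cannot fill these layers up, so that the leading part of $D$ injects into $O_m$ and $\dim D \le p^m$; verifying this injectivity against the interference of nilpotency is the delicate point.
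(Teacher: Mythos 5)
Your proposal stalls exactly at the lemma's content. The two steps you do carry out are correct but peripheral: the Nakayama/transitivity observation (any nonzero $D$-invariant subspace of $O_m$ generates $O_m$ as an ideal, hence contains a unit), and the rigidity step (if some $\theta$ has trivial stabilizer, then $d \mapsto d(\theta)$ embeds $D$ into $O_m$, and equality of dimensions would produce a nilpotent $d_0$ with $d_0(\theta) = \theta$, which is absurd). The crux --- producing such a $\theta$, or otherwise proving $\dim D \le p^m$ --- is precisely what you leave open, and the sketch you offer for it has no visible mechanism: the associated graded algebra of $W_m$ with respect to the standard filtration has dimension $mp^m$, and nothing in the hypotheses forces the ``leading part'' of $D$ (whose dimension equals $\dim D$) into a $p^m$-dimensional constituent; the no-invariant-ideal hypothesis controls the degree $-1$ end of the filtration, not the positive layers. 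Note also that your word ``equivalently'' fails in the direction you need: a trivial-stabilizer vector yields $\dim D \le p^m$, but the bound $\dim D \le p^m$ does not yield such a vector, and your strictness argument requires the vector itself (injectivity and then surjectivity of the evaluation map). So even a successful layer count would leave the strict inequality unproved.

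The missing idea is one you explicitly --- and too hastily --- ruled out: reduction in $m$. The paper does not shrink $m$ by factoring out an invariant ideal; it inducts on $m$ via Demushkin's conjugacy theorem. By Engel's theorem $D$ is nilpotent, hence has nonzero center, and one extracts a nonzero derivation $z$ commuting with $D$ with $z^p = 0$. Since $[D,z]=0$, the subspace $z(O_m)$ is $D$-stable, so $z(O_m)O_m$ is a nonzero $D$-invariant ideal, hence all of $O_m$; thus $z(O_m)$ contains a unit and $z \notin (W_m)_0$. Demushkin's theorem then conjugates $z$ to $\partial/\partial x_1$, so that $D \subseteq C_{W_m}(\partial/\partial x_1)$, which splits as an abelian ideal $O[x_2,\dots,x_m]\,\partial/\partial x_1$ of dimension $p^{m-1}$ plus $W_{m-1}(x_2,\dots,x_m)$. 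The projection of $D$ to $W_{m-1}$ again consists of nilpotent elements and admits no invariant ideals (an invariant ideal $I$ of $O[x_2,\dots,x_m]$ would make $(x_1)I$ a $D$-invariant ideal of $O_m$), so induction gives $\dim D \le p^{m-1} + \dim \mathrm{pr}_{W_{m-1}}D < 2p^{m-1} < p^m$. This conjugation-and-splitting step, which converts the no-invariant-ideal hypothesis into an actual reduction of the number of variables, is the concrete tool your plan lacks.
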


\begin{proof}
We will perform induction on $m$. For $m = 1$ the statement of the lemma is obvious
(each such subalgebra is one-dimensional). Put $Z = \set{z\in Z(D)}{z^p = 0}$. We 
have
$$
D(Z(O_m)) \subseteq Z(D(O_m)) \subseteq Z(O_m) .
$$
Arguing like in the proof of Lemma 3, we deduce that either $Z(O_m) = 0$ or $Z(O_m)$
contains a polynomial with a nonzero constant term. But the former is impossible 
because $D$ consists of nilpotent elements, so $Z \ne 0$. Therefore, 
$Z \not\subset (W_m)_0$, where $(W_m)_0$ is the zeroth term in the standard 
filtration (otherwise $Z(O_m)$ is contained in the maximal ideal of $O_m$). Choose 
$z\in Z$, $z \notin (W_m)_0$. It follows from Demushkin's results \cite{10} that $z$
is conjugate to the element $\partial/\partial x_1$ in $W_m$. Therefore, we may 
assume that
$$
D \subseteq C_{W_m}(\partial/\partial x_1) = 
\langle f\partial/\partial x_1 | f \in O[x_2, \dots, x_n] \rangle + 
W_{m-1}(x_2, \dots, x_m) .
$$
Here $W_{m-1}(x_2, \dots, x_m)$ is the Lie algebra of derivations of the algebra 
$O[x_2, \dots, x_m]$ of truncated polynomials in the variables $x_2, \dots, x_m$. 
It is easily seen that the first term is an ideal in this centralizer, so 
$pr_{W_{m-1}} D$ is a subalgebra of $W_{m-1}$ consisting of nilpotent elements. If 
$I$ is a $pr_{W_{m-1}} D$-invariant ideal in $O[x_2, \dots, x_m]$, then $(x_1)I$ is
a $D$-invariant ideal in $O_m$. Therefore, $I$ is a trivial ideal and we may apply
the induction hypothesis:
$$
\dim D \le p^{m-1} + \dim pr_{W_{m-1}} D < p^{m-1} + p^{m-1} < p^m .
$$
\end{proof}

\begin{remark}
Of course, Lemma \ref{lemma-4} is far from being the best result in this direction,
but, apparently, it suffices for our purposes.
\end{remark}

In conclusion, let us make several remarks concerning the possibility of obtaining the 
converse to the theorem. As has been noted in \cite{2}, the equality $L = H + L^2$,
where $H$ is the Cartan subalgebra of a Lie algebra $L$, provides a decomposition 
into a sum of two nilpotent subalgebras for Lie algebras with a nilpotent 
commutant. As has been shown in \cite{11}, this class of algebras coincides with 
the class of supersolvable Lie algebras. The semidirect sum $L + V$, where $L$ is a
nilpotent Lie algebra and $V$ is a faithful irreducible $L$-module, provides an 
example of a nonsupersolvable Lie algebra with such a decomposition. On
the other hand, if $L$ is the two-dimensional non-abelian Lie algebra and $V$ is the
irreducible $p$-dimensional $L$-module, then we obtain an example of a solvable Lie
algebra for which such a decomposition is impossible. Thus, the class of Lie 
algebras that can be represented as a sum of two nilpotent subalgebras, 
contains the class of supersolvable algebras and is contained in the class of solvable algebras,
both inclusions being strict. It would be interesting to provide a description of 
this class. 

The author is grateful to A.S. Dzhumadil'daev for his attention and help in this 
work.

\renewcommand{\refname}{Literature cited}

\end{document}